\newcommand{\Ac}{\mathcal{A}}
\newcommand{\Bc}{\mathcal{B}}
\newcommand{\Cc}{\mathcal{C}}
\newcommand{\Fc}{\mathcal{F}}
\newcommand{\Gc}{\mathcal{G}}
\newcommand{\Hc}{\mathcal{H}}
\newcommand{\Kc}{\mathcal{K}}
\newcommand{\Ll}{\mathcal{L}}
\newcommand{\Pc}{\mathcal{P}}
\newcommand{\CC}{\mathbb{C}}
\newcommand{\RR}{\mathbb{R}}
\theoremstyle{plain}
\newtheorem{theorem}{Theorem}[section]
\newtheorem{prop}[theorem]{Proposition}
\newtheorem{lem}[theorem]{Lemma}
\theoremstyle{definition}
\newtheorem{Def}[theorem]{Definition}
\newtheorem{remark}[theorem]{Remark}
\numberwithin{equation}{section}
\newcommand{\eq}[1][r]
   {\ar@<-3pt>@{-}[#1]
    \ar@<-1pt>@{}[#1]|<{}="gauche"
    \ar@<+0pt>@{}[#1]|-{}="milieu"
    \ar@<+1pt>@{}[#1]|>{}="droite"
    \ar@/^2pt/@{-}"gauche";"milieu"
    \ar@/_2pt/@{-}"milieu";"droite"}
\renewcommand{\emptyset}{\varnothing}
\newcommand{\Endo}[1][]{\mathrm{End}_{\raise1.5ex\hbox to.1em{}#1}}
\newcommand{\Hom}[1][]{\mathrm{Hom}_{\raise1.5ex\hbox to.1em{}#1}}
\newcommand{\RHom}[1][]{\mathrm{RHom}_{\raise1.5ex\hbox to.1em{}#1}}
\newcommand{\Ext}[2][]{\mathrm{Ext}_{\raise1.5ex\hbox to.1em{}#1}^{#2}}
\newcommand{\THom}[1][]{\mathrm{THom}_{\raise1.5ex\hbox to.1em{}#1}}
\newcommand{\Tens}[1][]{\mathbin{\otimes_{\raise1.5ex\hbox to-.1em{}#1}}}
\newcommand{\LTens}[1][]{\mathbin{\otimes_{\raise1.5ex\hbox to-.1em{}#1}^{L}}}
\newcommand{\Tor}[2][]{\mathrm{Tor}^{\raise1.5ex\hbox to.1em{}#1}_{#2}}
\newcommand{\shendo}[1][]{{\mathcal{E}nd}_{\raise1.5ex\hbox to.1em{}#1}}
\newcommand{\shaut}[1][]{{\mathcal{A}ut}_{\raise1.5ex\hbox to.1em{}#1}}
\renewcommand{\hom}[1][]{{\mathcal{H}om}_{\raise1.5ex\hbox to.1em{}#1}}
\newcommand{\rhom}[1][]{{R\mathcal{H}om}_{\raise1.5ex\hbox to.1em{}#1}}
\newcommand{\ext}[2][]{{\mathcal{E}xt}_{\raise1.5ex\hbox to.1em{}#1}^{#2}}
\newcommand{\thom}[1][]{{T\mathcal{H}om}_{\raise1.5ex\hbox to.1em{}#1}}
\newcommand{\tens}[1][]{\mathbin{\otimes_{\raise1.5ex\hbox to-.1em{}#1}}}
\newcommand{\ltens}[1][]{\mathbin{\otimes_{\raise1.5ex\hbox to-.1em{}#1}^{L}}}
\newcommand{\tor}[2][]{{\mathcal{T}or}^{\raise1.5ex\hbox to.1em{}#1}_{#2}}
\newcommand{\GHom}[1][]{\mathrm{GHom}_{\raise1.5ex\hbox to.1em{}#1}}
\newcommand{\GExt}[2][]{\mathrm{GExt}_{\raise1.5ex\hbox to.1em{}#1}^{#2}}
\newcommand{\FHom}[1][]{\mathrm{FHom}_{\raise1.5ex\hbox to.1em{}#1}}
\newcommand{\ghom}[1][]{{\mathcal{GH}om}_{\raise1.5ex\hbox to.1em{}#1}}
\newcommand{\gext}[2][]{{\mathcal{GE}xt}_{\raise1.5ex\hbox to.1em{}#1}^{#2}}
\newcommand{\fhom}[1][]{{\mathcal{FH}om}_{\raise1.5ex\hbox to.1em{}#1}}
\newcommand{\tenstop}[1][]{\mathbin{\hat{\otimes}_{\raise1.5ex\hbox to-.1em{}#1}}}
\newcommand{\homtop}[1][]{\mathcal{L}_{\raise1.5ex\hbox to.1em{}#1}}
\newcommand{\Homtop}[1][]{\mathrm{L}_{\raise1.5ex\hbox to.1em{}#1}}
\newcommand{\B}{\mathcal{B}}
\def\absdoim#1{\underline{#1}_*}
\def\reldoim[#1]#2{\underline{#2}_{|{#1}*}}
\def\doim{\@ifnextchar [{\reldoim}{\absdoim}}
\def\absdeim#1{\underline{#1}_*}
\def\reldeim[#1]#2{\underline{#2}_{|{#1}*}}
\def\deim{\@ifnextchar [{\reldeim}{\absdeim}}
\def\absdopb#1{\underline{#1}^{-1}}
\def\reldopb[#1]#2{\underline{#2}_{|{#1}}^{-1}}
\def\dopb{\@ifnextchar [{\reldopb}{\absdopb}}
\def\absboim#1{\underline{\underline{#1}}_*}
\def\relboim[#1]#2{\underline{\underline{#2}}_{|{#1}*}}
\def\boim{\@ifnextchar [{\relboim}{\absboim}}
\def\absbeim#1{\underline{\underline{#1}}_*}
\def\relbeim[#1]#2{\underline{\underline{#2}}_{|{#1}*}}
\def\beim{\@ifnextchar [{\relbeim}{\absbeim}}
\def\absbopb#1{\underline{\underline{#1}}^*}
\def\relbopb[#1]#2{\underline{\underline{#2}}_{|{#1}}^*}
\def\bopb{\@ifnextchar [{\relbopb}{\absbopb}}
\newcommand{\Aut}[1][]{\operatorname{Aut}_{\raise1.5ex\hbox to.1em{}#1}} 
\newcommand{\cat}{\mathsf}
\newcommand{\catHom}[1][]{\mathsf{1Hom}_{\raise1.5ex\hbox to.1em{}#1}}
\newcommand{\catEnd}[1][]{\mathsf{1End}_{\raise1.5ex\hbox to.1em{}#1}}
\newcommand{\catAut}[1][]{\mathsf{1Aut}_{\raise1.5ex\hbox to.1em{}#1}}
\newcommand{\catSh}{\mathsf{Sh}} 
\newcommand{\catLcSh}{\mathsf{LcSh}} 
\newcommand{\catCstSh}{\mathsf{CstSh}} 
\newcommand{\catCstS}{\mathsf{CstS}} 
\newcommand{\CatFct}[1][]{2{\mathbf{Fct}}_{\raise1.5ex\hbox to.1em{}#1}}
\newcommand{\stkEqui}[1][]{\mathfrak{Equi}_{\raise1.5ex\hbox to.1em{}#1}}
\begin{document}

\title[A pair of quasi-inverse functors for an extension of perverse sheaves]
{A pair of quasi-inverse functors for an extension of perverse sheaves}

\author[D.\ Dupont]{Delphine Dupont}
\address{University of Oxford\\Mathematical institute 24Ð29 St Giles'
Oxford
OX1 3LB\\}
\email{delphine@maths.ox.ac.uk}

\subjclass{  }
\date{}

\keywords{}

\begin{abstract}
In \cite{McV}, the authors show that on a Thom-Mather space $X$ the category $\Pc erv_{X}$ of perverse sheaves is equivalent to the category $\Cc(F,G,T)$ whose objects are data of perverse sheaves on the complementary of the closed strata $S$, a local system on $S$ and some gluing data.  To show this equivalence of categories, they define a functor $\Cc$ going from the category $\Pc erv_{X}$ to the category $\Cc(F,G,T)$. This definition is based on the notion of perverse link. They do not define a quasi-inverse of this functor. moreover they have to consider first the case where $S$ is contractible and then they extend the equivalence to the topological case using the stack theory. In this paper we propose to consider what we call a perverse closed set which is a bit different from a perverse link in order to define a quasi-inverse to the functor $\Cc$. Moreover we treat directly the topological case without using stack theory. 
\end{abstract}

\maketitle

\section*{Introduction}
Let $(X, \Sigma)$ be a Thom-Mather space and $S$ a closed stratum. In \cite{McV}, the authors describe the category $\Pc erv_{\Sigma}$ of $\Sigma$-perverse sheaves in an elementary way. They give the first step to an inductive method. Hence they show the equivalence between the category $\Pc erv_{\Sigma}$ and a category whose objects are data of a perverse sheaf on $X_{0}=X\backslash S$, a locally constant sheaf on $S$ and some gluing data. Their first idea was to consider the triangle:
\[R\Gamma_{S}\Fc\rightarrow \Fc\rightarrow R\Gamma_{X_{0}}\Fc  \]
Indeed, a perverse sheaf $\Fc$ is isomorphic to the cone of the morphism:
\[ R\Gamma_{X_{0}}\Fc\longrightarrow R\Gamma_{S}\Fc[1]\]
and it is easy to show that $R\Gamma_{X_{0}}\Fc$ depends only of the restriction of $\Fc$ to $X_{0}$. More precisely, they show that this morphism can be defined up to isomorphism from the data  of $\Fc\mid_{X_{0}}$, a local system on $S$ and some gluig data.  Hence they introduce a category called zig-zag category, wich objects are this kind of data but this category is not equivalent to $\Pc erv_{X}$. They define a functor from $\Pc erv_{\Sigma}$ to the zig-zag category and they show that this functor is a bijection on the classes of objects and full, but not faithful. Actually, this comes from the fact that  even if  we can define functorially a morphism of complexes of sheaves from the data of an object of the zig-zag category, taking the cone of such a morphism is not functorial. 

Hence they have to introduce a new category $\Cc(F,G,T)$. It is defined from the data of two functors $F$ and $G$ and a natural transform $T$ depending of a choice of a set called perverse link and they show that this category is equivalent to the category $\Pc erv_{\Sigma}$. To show the equivalence they don't define a couple of quasi-inverse functors: first they show it in the case where $S$ is contractible using the zig-zag category, then they extend the equivalence to the topological case using the stack theory. 

The purpose of this paper is to give an equivalence of $\Pc erv_{\Sigma}$ with the category $\Cc(F', G',T')$ where the data $F'$, $G'$ and $T'$ are sightly different from the one in \cite{McV}, notably based on a different definition of a perverse closed set,  but for which the couple of quasi-inverse functors is explicit. One other advantage of this point of view is that we treat the topological case directly without using the stack theory. 

The point is that if $\Kc$ is a closed set with good properties and $\Ll$ is its complement in $X$, a perverse sheaf $\Fc$ is functorially isomorphic to the cone of the morphism:
\[R\Gamma_{\Ll}\Fc\longrightarrow R\Gamma_{\Kc}\Fc[1]\]
This comes from the lemma \ref{homologique} that can't be apply in the preceding case.
Moreover this morphism can be functorially defined up to isomorphism from the data of an object in $\Cc(F',G',T')$ using a  classical property of sheaves on stratified space. 

This paper is organized as follow.
In section $1$ we first recall the construction given by McPherson and Vilonen in \cite{McV} of an abelian category from the data of two abelian categories, two functors and a natural transform. Then we recall a classical gluing property of sheaves on a stratified space. These propositions are key fact in the main proof. 
Then in section $2$ we define what we call a perverse closed  set. It allows us to define two functors $F$ and $G$ and a natural transform $T$ from $F$ to $G$, in order to define the new category $\Cc(F,G,T)$. But even if this category is slightly different from the one of MacPherson and Vilonen, the object are still data of a perverse sheaf on $X_{0}$ a local system on $S$ and gluing data. The only change appear on the gluing data. Then using the gluing property of sheaves on a stratified space we define the two functors $\Cc: \Pc erv_{X}\rightarrow \Cc(F,G,T)$ and $\Pc: \Cc(F,G,T)\rightarrow \Pc erv_{X}$ and we show that they are quasi-inverse. 
\section{Preliminaries}
\subsection{The abelian category $\Cc(F,G,T)$}
Let us recall the definition and some properties of the category $\Cc(F,G,T)$ defined by MacPherson and Vilonen in \cite{McV}. \\
Let $\Ac$ and $\Bc$ be two categories, $F:\Ac\rightarrow \Bc$ and $G:\Ac\rightarrow \Bc$ two functors and $T:F\rightarrow G$ a natural transformation. 
\begin{Def}
We denote $\Cc(F,G,T)$ the category whose:
\begin{itemize}
\item objects are the families $(A,B,u,v)$ where A is an object of $\Ac$, $B$ is an object of $\Bc$ and $u:F(A)\rightarrow B$ and $v:B\rightarrow G(A)$ are morphisms of $\Bc$ such that the following diagram commutes:
\[\xymatrix{
F(A)\ar[rr]^{T_A}\ar[rd]_u & & G(A)\\
&B \ar[ru]_v
}\]
\item morphisms between two objects $(A,B,u,v)$ and $(A',B',u',v')$ are the data of a couple of morphisms $(a,b)$ where $a:A\rightarrow A'$ is a morphism of $\Ac$ and $b:B\rightarrow B'$ is a morphism of $\Bc$ such that the diagram commutes:
\[
\xymatrix @!0 @R=3pc @C=5pc {
     FA\ar[rr]^{T_{A}} \ar[rd]_u \ar[dd]_{Fa} &&  GA \ar[dd]^{Ga} \\
    & B \ar[ru]_{v} \ar[dd]_{b} \\
    FA'\ar[rr] |!{[ur];[dr]}\hole \ar[rd]_{u'} && GA' \\
    & B' \ar[ru]_{v'} }
\]
\end{itemize}
\end{Def}

\begin{prop}
If $\Ac$ and $\Bc$ are abelian, if $F$ is right exact and $G$ is left exact then $\Cc(F,G,T)$ is abelian.
\end{prop}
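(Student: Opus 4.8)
The plan is to verify the three defining properties of an abelian category in turn: that $\Cc(F,G,T)$ is additive, that every morphism admits a kernel and a cokernel, and that for every morphism the canonical arrow from its coimage to its image is an isomorphism (Grothendieck's criterion). The first point is routine: a right exact (resp.\ left exact) functor between abelian categories is in particular additive, so $F$ and $G$ preserve finite biproducts. One may then take $(0,0,0,0)$ as a zero object and define the biproduct of $(A,B,u,v)$ and $(A',B',u',v')$ componentwise as $(A\oplus A',\,B\oplus B',\,u\oplus u',\,v\oplus v')$, using the identifications $F(A\oplus A')\cong FA\oplus FA'$ and $G(A\oplus A')\cong GA\oplus GA'$; the triangle relation for the biproduct follows from $T_{A\oplus A'}=T_A\oplus T_{A'}$, which is naturality of $T$ together with additivity.

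The heart of the argument is the construction of kernels and cokernels, and this is where the exactness hypotheses are used. Fix a morphism $(a,b)\colon (A,B,u,v)\to(A',B',u',v')$, so that $b\comp u=u'\comp F(a)$ and $G(a)\comp v=v'\comp b$. For the kernel I would set $K=\ker a$ and $L=\ker b$, with inclusions $\iota_K$ and $\iota_L$. The map $u\comp F(\iota_K)$ satisfies $b\comp u\comp F(\iota_K)=u'\comp F(a\comp\iota_K)=0$, hence factors uniquely through $L$, defining $u_K\colon F(K)\to L$; this step needs only functoriality of $F$. For $v_K\colon L\to G(K)$ I use that $G$ is left exact, so that $G(\iota_K)\colon G(K)\to G(A)$ realises $G(K)=\ker G(a)$; since $G(a)\comp v\comp\iota_L=v'\comp b\comp\iota_L=0$, the map $v\comp\iota_L$ factors uniquely through $G(K)$. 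The relation $v_K\comp u_K=T_K$ is then checked after composing with the monomorphism $G(\iota_K)$, where it reduces to $v\comp u\comp F(\iota_K)=T_A\comp F(\iota_K)=G(\iota_K)\comp T_K$, i.e.\ to the triangle relation in the source and naturality of $T$. The cokernel is dual: with $C=\coker a$, $D=\coker b$ and projections $p,q$, right exactness of $F$ gives $F(p)\colon F(A')\to F(C)$ as the cokernel of $F(a)$, so $q\comp u'$ (which kills $F(a)$) factors through $F(C)$ to define $u_C\colon F(C)\to D$, while $G(p)\comp v'$ factors through $D=\coker b$ to define $v_C\colon D\to G(C)$; the relation $v_C\comp u_C=T_C$ then follows after composing with the epimorphism $F(p)$. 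The universal properties of these objects are inherited componentwise from $\Ac$ and $\Bc$, so the two forgetful functors $\Cc(F,G,T)\to\Ac$ and $\Cc(F,G,T)\to\Bc$ preserve kernels and cokernels.

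Finally I would compute the coimage and the image of a morphism $(a,b)$ componentwise. By the formulas above, the coimage (the cokernel of the kernel) has $\Ac$-component $\coker(\iota_K)=\coim a$ and $\Bc$-component $\coker(\iota_L)=\coim b$, while the image (the kernel of the cokernel) has $\Ac$-component $\ker p=\im a$ and $\Bc$-component $\ker q=\im b$. Since the forgetful functors preserve these constructions, the canonical comparison morphism has for components the canonical arrows $\coim a\to\im a$ and $\coim b\to\im b$, which are isomorphisms because $\Ac$ and $\Bc$ are abelian. As a morphism of $\Cc(F,G,T)$ is an isomorphism precisely when both of its components are (the compatibility of the inverse with $u$ and $v$ being automatic), the comparison morphism is an isomorphism, and the claim follows.

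I expect the main obstacle to be not the verification of the universal properties — which descend componentwise — but the construction of the structure maps $u_K,v_K,u_C,v_C$ together with the proof that the triangle relation $v\comp u=T$ survives to the kernel and the cokernel. It is precisely here that right exactness of $F$ (to identify $F(C)$ with a cokernel and to exploit that $F(p)$ is an epimorphism) and left exactness of $G$ (to identify $G(K)$ with a kernel and to exploit that $G(\iota_K)$ is a monomorphism) are indispensable; without them the relevant factorisations, and the identities $v_K\comp u_K=T_K$ and $v_C\comp u_C=T_C$, would both fail.
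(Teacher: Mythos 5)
Your proof is correct, and it is the standard argument: kernels and cokernels in $\Cc(F,G,T)$ are computed componentwise, with left exactness of $G$ needed exactly to build the structure map $\ker b\to G(\ker a)$ and right exactness of $F$ to build $F(\coker a)\to\coker b$, after which the comparison $\coim\to\im$ is an isomorphism because it is so in each component. Note that the paper itself offers no proof --- it recalls this proposition from \cite{McV} --- and your construction is essentially the one given by MacPherson and Vilonen there, so there is nothing to contrast beyond the (harmless) convention that ``right/left exact'' is taken to include additivity.
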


\subsection{Gluing of sheaves}
Now let us recall a classical property of sheaves on a space stratified by two strata.  In fact we give an answer to the question: ``what supplementary data are needed to recover a sheaf from the data of its restrictions to each stratum ?''. More precisely we show the equivalence between the category of sheaves and a category whose objects are  data of a sheaf on each stratum plus a morphism of sheaves. \\

Let $X$ be a topological space, $F$ a closed set of $X$ and $U$ the complement of $F$ in $X$. One denotes $i$ and $j$ the embeddings:
\[ i: F\hookrightarrow X, ~~~j:U\hookrightarrow X\]
\begin{Def}
Let $S_F$ be the category whose 
\begin{itemize}
\item[$\bullet$] objects are the data of a triple $(\Fc_F, \Fc_U, f)$ where $\Fc_F$ and $\Fc_U$ are sheaves on respectively $F$ and $U$ and $f$ is a morphism of sheaves:
\[ f:\Fc_F\longrightarrow i^{-1}j_*\Fc_U\]
\item[$\bullet$] morphisms between two objects $(\Fc_F, \Fc_U, f)$ and $(\Gc_F, \Gc_U, g)$ are couples $(\phi_F, \phi_U)$ of morphisms of sheaves $\phi_F:\Fc_F\rightarrow \Gc_F$, $\phi_U:\Fc_U\rightarrow \Gc_U$ such that the diagram commutes: 
\[\xymatrix{
\Fc_F  \ar[r]^-{f} \ar[d]_{\phi_F} & i^{-1}j_*\Fc_U \ar[d]^{i^{-1}j_*\phi_U}\\
\Gc_F \ar[r]_-{g} & i^{-1}j_*\Gc_U
}\]
\end{itemize}
\end{Def}
\begin{prop}
The category $\catSh_X$ of sheaves on $X$ is equivalent to the category $\cat S_F$.
\end{prop}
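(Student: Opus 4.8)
The plan is to produce an explicit pair of quasi-inverse functors between $\catSh_X$ and $\cat S_F$. In one direction, define $\Phi\colon\catSh_X\to\cat S_F$ by restriction: to a sheaf $\Fc$ on $X$ associate the triple $(i^{-1}\Fc,\,j^{-1}\Fc,\,f_{\Fc})$, where $f_{\Fc}\colon i^{-1}\Fc\to i^{-1}j_*j^{-1}\Fc$ is obtained by applying $i^{-1}$ to the adjunction unit $\Fc\to j_*j^{-1}\Fc$ of $j^{-1}\dashv j_*$. A morphism $\phi\colon\Fc\to\Gc$ is sent to $(i^{-1}\phi,j^{-1}\phi)$, and naturality of the unit makes the required square commute, so $\Phi$ is a functor. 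In the other direction, define $\Psi\colon\cat S_F\to\catSh_X$ on an object $(\Fc_F,\Fc_U,f)$ as the fibre product
\[
\Psi(\Fc_F,\Fc_U,f)\;=\;i_*\Fc_F\times_{\,i_*i^{-1}j_*\Fc_U\,}j_*\Fc_U ,
\]
where the two legs are $i_*f\colon i_*\Fc_F\to i_*i^{-1}j_*\Fc_U$ and the unit $j_*\Fc_U\to i_*i^{-1}j_*\Fc_U$ of $i^{-1}\dashv i_*$. Fibre products exist in $\catSh_X$ (limits are computed section by section), so $\Psi$ is well defined and functorial.

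The heart of the argument is the isomorphism $\Psi\Phi\cong\id$. For a sheaf $\Fc$, the two units $\Fc\to i_*i^{-1}\Fc$ and $\Fc\to j_*j^{-1}\Fc$ become equal after composition into $i_*i^{-1}j_*j^{-1}\Fc$ (by naturality of the first unit applied to the second), so the universal property of the fibre product yields a canonical morphism $\Fc\to\Psi\Phi(\Fc)$. I would check it is an isomorphism on stalks, treating the two strata separately. For $x\in U$ the closed embedding gives $(i_*\Gc)_x=0$ for every sheaf $\Gc$, so the fibre-product stalk collapses to $(j_*j^{-1}\Fc)_x=\Fc_x$. For $x\in F$ the unit leg $j_*j^{-1}\Fc\to i_*i^{-1}j_*j^{-1}\Fc$ restricts to the identity on stalks, whence the fibre-product stalk is $(i_*i^{-1}\Fc)_x=\Fc_x$. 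In both cases the canonical map realizes this identification, so it is a stalkwise, hence a genuine, isomorphism, and naturality in $\Fc$ is immediate.

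For the reverse composite $\Phi\Psi\cong\id$, write $\Fc=\Psi(\Fc_F,\Fc_U,f)$ and apply the exact functors $i^{-1}$ and $j^{-1}$, which preserve the defining fibre product. Using $j^{-1}i_*=0$ and $j^{-1}j_*\cong\id$ one gets $j^{-1}\Fc\cong\Fc_U$; using $i^{-1}i_*\cong\id$ together with the triangle identity, which makes $i^{-1}$ of the unit the identity on $i^{-1}j_*\Fc_U$, the fibre product degenerates to $i^{-1}\Fc\cong\Fc_F$, and tracing $i^{-1}i_*f$ through the isomorphism $i^{-1}i_*\cong\id$ shows that the recovered gluing morphism is exactly $f$. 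These isomorphisms are natural in the object, so $\Phi\Psi\cong\id$.

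The routine parts are the functoriality of $\Phi$ and $\Psi$ and the various naturalities. The main obstacle is the stalk computation in $\Psi\Phi\cong\id$: one must use carefully that $i$ is a closed embedding (so $i_*$ is extension by zero and $i^{-1}i_*\cong\id$) and that $j$ is open (so $j^{-1}j_*\cong\id$ and $j^{-1}i_*=0$); and in the $\Phi\Psi$ direction one must verify that the triangle identities force the reconstructed morphism to coincide with the given $f$, rather than merely with some isomorphic datum.
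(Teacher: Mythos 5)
Your proposal is correct and takes essentially the same route as the paper: your $\Phi$ and $\Psi$ are exactly the restriction functor $R_F$ and the fibre-product gluing functor $G_F$ defined there. The only difference is that you actually carry out the stalkwise verification (using that $i$ is closed and $j$ is open) that the two composites are naturally isomorphic to the identity, a check the paper explicitly leaves to the reader.
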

\begin{proof}
We just define a couple of functors, the fact that they are quasi-inverse is left to the reader. \\
We denote $R_F$ the ``restriction'' functor going from $\catSh_X$ to $\cat S_F$ defined by:
\[  \begin{array}{cccc}
R_F: &\catSh_X & \longrightarrow & \cat S_F\\
& \Fc & \longmapsto & (\Fc\mid_F,\Fc\mid_U, \eta )\\
& (\phi: \Fc\rightarrow \Gc) & \longmapsto & (\phi\mid_F, \phi\mid_U)
\end{array}  \]
where $\eta$ is the restriction of the natural morphism given by the adjunction:
\[ \eta: i^{-1}\Fc\longrightarrow i^{-1}j_*j^{-1}\Fc. \]
Let us denote $G_F$ the ``gluing" functor going from $\cat S_F$ to $\catSh_X$ that associate to a family $(\Fc_F, \Fc_U, f)$ the pull-back of the diagram:
\[\xymatrix{
& i_*\Fc_F \ar[d]^{i_*f}\\
j_*\Fc_U \ar[r]_-{\eta}&  i_*i^{-1}j_*\Fc_U
}\]
Where $\eta$ is the natural morphism given by the adjunction between the functors $i_*$ and $i^{-1}$. \\
If $(\phi_F, \phi_U)$ is a morphism from $(\Fc_F, \Fc_U, f)$ to $(\Gc_F, \Gc_U, g)$ the commutations conditions and the universal property assure the existence of a morphism from $G_F\big((\Fc_F, \Fc_U, f)\big)$ to $G_F\big((\Gc_F, \Gc_U, g)\big)$. This morphism is the image of $(\phi_F, \phi_U)$ by $G_F$. 
\end{proof}
Now let us suppose that $U$ is an open dense set in $X$. This implies that the couple $(F,U)$ is a stratification of $X$. \\
Let us recall that a constructible sheaf relatively to the stratification $(F,U)$ is a sheaf which the restrictions to $U$ and $F$ are locally constant sheaves. One denotes $\catCstSh_F$ the full sub-category of $\catSh_{X}$ whose objects are constructible sheaves. In the same way, let us denote $\catCstS_F$ the full-subcategory of $\cat S_F$ whose objects are the families with locally constant sheaves on $F$ and on $U$. The restriction of  $R_F$ to $\catCstSh_F$ and the restriction of $G_F$ to $\catCstS_F$ are a couple of quasi-inverse functors. \\

\section{Extension of perverse sheaves}
 
 Let $(X,\Sigma)$ be a Thom Mather space with only one closed stratum $S$ of  minimal dimension $d$, $X_0$ its complement $X\backslash S$ and $i$ and $j$ the embeddings:
 \[j:X_0\hookrightarrow X;~~~i:S\hookrightarrow X \]
 We denote $\Sigma_{0}$ the stratification of $X_{0}$ given by the stratification $\Sigma$ minus the stratum $S$.\\
 
 In this section we use the preceding construction with $\Ac$ the category of $\Sigma_{0}$-perverse sheaves on $X_{0}$ and $\Bc$ the category of locally constant sheaves on $S$. Hence objects of the category $\Cc(F,G,T)$ will be a quadruple $(\Fc, \Ll, u,v)$ where $\Fc$ is a perverse sheaf on $X_{0}$, $\Ll$ a locally constant sheaf on $S$ and $u$ and $v$ morphisms of sheaves that satisfy a commutation condition. The morphisms $u$ and $v$ formed the gluing data. 
 
 In a first time we define the functors $F$, $G$ and the natural transform $T$ to defined $\Cc(F,G,T)$ these definitions are based on the notion of perverse closed set. Then we define a couple of quasi-inverse $\Cc$ and $\Pc$ between the categories $\Pc erv_{\Sigma}$ and $\Cc(F,G,T)$.

 \begin{Def}
 Let $\Kc$ be a closed set of $X$ and $\Ll$ be its complement in $X$, it is a perverse closed set if for any perverse sheaf $\Fc \in \Pc erv_{\Sigma_0}$ we have:
 \begin{itemize}
 \item $\forall k<-d, R^k\Gamma_{\Kc}Rj_*\Fc=0$,
 \item  $\forall k\geq-d, R^k\Gamma_{\Ll}Rj_*\Fc=0$,
 \end{itemize}
 \end{Def}
 
 \begin{prop}
Perverse closed set exists.
 \end{prop}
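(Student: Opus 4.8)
The plan is to realize conditions $(1)$ and $(2)$ as a geometric incarnation, by a single closed set $\Kc$, of the perverse truncation of $Rj_*\Fc$ at degree $-d$, and to reduce the whole statement to a cutting problem on the link of $S$. Write $n=\dim X$ and use the Thom--Mather structure: I fix a tubular neighbourhood $T$ of $S$ with a tube function $\rho\colon T\to[0,\varepsilon)$, $\rho^{-1}(0)=S$, and the associated conic local product $T\setminus S\cong L_S\times(0,\varepsilon)$ over $S$, where $L_S\to S$ is the link bundle with compact stratified fibre $L$ of dimension $n-d-1$. I look for $\Kc$ of the form $S\cup\widetilde\Kc$, where $\widetilde\Kc$ is the radial cone on a closed fibrewise subset $\Kc_L\subseteq L_S$, chosen nowhere dense in $X_0$ and meeting $X\setminus T$ trivially, and I set $\Ll=X\setminus\Kc$.

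First I would dispose of the conditions away from $S$. Every stratum of $X_0$ has dimension $>d$, so the support condition of perversity forces $\Fc\mid_{X_0}$ to lie in degrees $\le -d-1$; since on the open set $\Ll\cap X_0$ one has $R\Gamma_{\Ll}Rj_*\Fc\cong\Fc$, condition $(2)$ holds there automatically, and since $\Kc$ is nowhere dense with no top--stratum part, condition $(1)$ is not violated on the open dense locus. The content is therefore the stalk at a point $s\in S$. By the cone formula and constructibility, $(Rj_*\Fc)_s\cong R\Gamma(L,\Fc_L)$ with $\Fc_L:=\Fc\mid_L$, and the uniform shift $\mathcal P:=\Fc_L[-(d+1)]$ is a perverse sheaf on $L$. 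Taking stalks at $s$ in the localization triangle $R\Gamma_{\Kc}\to\mathrm{id}\to R\Gamma_{\Ll}$ gives
\[(R\Gamma_{\Kc}Rj_*\Fc)_s\cong R\Gamma_{\Kc_L}(L,\mathcal P)[d+1],\quad (R\Gamma_{\Ll}Rj_*\Fc)_s\cong R\Gamma(L\setminus\Kc_L,\mathcal P)[d+1].\]
Hence $(1)$ and $(2)$ at $s$ are equivalent, for every perverse $\mathcal P$ on $L$, to $R\Gamma_{\Kc_L}(L,\mathcal P)$ being in degrees $\ge 1$ and $R\Gamma(L\setminus\Kc_L,\mathcal P)$ in degrees $\le 0$; by the triangle these say exactly that $\Kc_L$ realizes the truncation of $R\Gamma(L,\mathcal P)$ at $(\le 0,\ge 1)$.

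I recognize this pair as a mutually Verdier--dual Artin--type vanishing: the second condition says $L\setminus\Kc_L$ is affine--like for perverse sheaves, the first is its dual. I would produce $\Kc_L$ by cutting $L$ with a generic proper stratified Morse function and taking $\Kc_L$ to be the corresponding half; stratified Morse theory ensures that the normal Morse data of a perverse sheaf is concentrated in a single degree, which delivers both vanishings simultaneously and uniformly in $\mathcal P$. Since the same analysis recurs at every stratum of $X_0$ that the wall meets, I would run it inductively on the depth of the stratification of $L$, requiring $\Kc_L$ to be itself a perverse cut for each smaller link, so that $(1)$ and $(2)$ hold at all points and not merely along $S$. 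Finally I globalize over $S$: the tube is locally trivial and all the conditions are local on $S$ and stalkwise, so one patches the fibrewise choices over a trivializing cover (or chooses the Morse function globally from the controlled Thom--Mather data); because only the locality of a cohomological vanishing is used, and no category has to be descended, stack theory is avoided.

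The hard part will be this last construction: exhibiting one closed $\Kc_L$ that forces both complementary vanishings for all perverse $\mathcal P$ at once, and doing so compatibly at every stratum so that the ambient conditions---which off $S$ are very rigid, pinning the codimension of the wall at each stratum---are never broken. This is precisely the existence of a perverse link, and the genuine input is the stratified Morse theory / Artin vanishing for subanalytically constructible perverse sheaves, together with careful bookkeeping of the shift by $d+1$ (the $d$ directions along $S$ plus the radial one) that locates the cut at degree $-d$.
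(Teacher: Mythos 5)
Your reduction to the link is sound and the degree bookkeeping is correct: writing $\Kc = S \cup \mathrm{cone}(\Kc_L)$ inside the tube, the stalk computation at $s \in S$ does translate conditions (1) and (2) into the requirement that, for every perverse sheaf $\mathcal{P}$ on the link $L$, the complex $R\Gamma_{\Kc_L}(L,\mathcal{P})$ be concentrated in degrees $\geq 1$ and $R\Gamma(L\setminus\Kc_L,\mathcal{P})$ in degrees $\leq 0$; this is essentially the MacPherson--Vilonen perverse-link condition, and your observation that the same conditions recur at the strata of $X_0$ which the wall meets is also right. But the proposal does not prove the proposition. The existence of a single closed set $\Kc_L$ achieving both vanishings simultaneously, uniformly in $\mathcal{P}$, and compatibly at every stratum under the induction on depth --- which is the entire content of the statement --- is exactly what you defer to ``a generic proper stratified Morse function'' and ``Artin-type vanishing,'' with no precise statement of the Morse-theoretic input and no verification that a generic half of the link has these properties. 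You flag this yourself as ``the hard part''; an argument that stops at the hard part is a plan, not a proof.

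Two further points. First, your intermediate claim that condition (1) ``is not violated on the open dense locus'' because $\Kc$ is nowhere dense with no top-stratum part is wrong, and it contradicts both your own closing remark and the paper's examples: conditions (1) and (2) pin the position and codimension of $\Kc\cap X_0$ exactly. In the normal-crossing example the perverse closed set $\RR^n\subset\CC^n$ meets the open stratum $(\CC^*)^n$ in $(\RR^*)^n$, of real codimension exactly $n$; a wall of any other codimension in the top stratum forces $R\Gamma_{\Kc}$ or $R\Gamma_{\Ll}$ of some perverse sheaf into forbidden degrees, so ``nowhere dense'' is far from sufficient and ``no top-stratum part'' is incompatible with (2). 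Second, for comparison with the source: the paper itself offers no general proof of this proposition --- it states it and supports it with examples, citing Galligo--Granger--Maisonobe for $\RR^n\subset\CC^n$ with the normal-crossing stratification, and leaving its second (toric) example unfinished. So the statement rests, there as here, on an appeal to known constructions; your outline is a reasonable sketch of how a genuine proof would be organized, but as written it has a gap precisely at its central step.
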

 \begin{bf}
 Examples
 \end{bf}
 \begin{itemize}
 \item Let us consider the topological space $\CC^{n}$ stratified by the normal crossing stratification, according to Galligo, Granger and Maisonobe \cite{GGM} the  set $\RR^{n}$ is a perverse closed set. 
 \item Let $X=\{(u,v,w)\in \CC^{3}\mid uw=v^{2}\}$ be the affine toric variety. The $2$-torus act on $X$:
 \[\begin{array}{cccc}
 (\CC^{*})^{2} \times X : &\longrightarrow & X\\
 ( (t_{1}, t_{2}), (u,v,w))& \longmapsto & (t_{1}u, t_{1}t_{2}v, t_{1}t_{2}^{2}w)
 \end{array}\]
 The orbits of this action provide a Whitney stratification. Then the set $X\cap \{(u,v,w)\in \CC^{3}\mid u\in \RR^{+}, w\in \RR^{+}\}$ is a perverse closed set for this stratification.  Let us show it: 
 \end{itemize}
\begin{remark}\label{remarque ferme pervers}
If $\Kc$ is a perverse closed set and $\Ll$ is its complement in $X$, then $\Ll\cap S=\emptyset$. Indeed let us consider $\Kc$ and $\Ll$ a closed set an its complement in $X$ and the perverse sheaf $\CC_{S}[d]$ where $\CC_{S}$ is the constant sheaf supported by $S$.  If $x\in \Ll\cap S$ we have:
\[(R^{-d}\Gamma_{\Ll}\CC_{S}[d])_{x}\simeq(\Gamma_{\Ll}\CC_{S})_{x}=\CC \]
and $\Kc$ is not a perverse closed set. 
\end{remark}

Now let us fix a perverse closed set $\Kc$, $\Ll$ will denote its complement in $X$, let $j_{\Ll}$ and $i_{\Kc}$ the embeddings of respectively $\Ll$ and $\Kc$ in $X$:
\[ j_{\Ll}:\Ll\hookrightarrow X; ~~~~i_{\Kc}:\Kc \hookrightarrow X \]
\begin{Def}
Let us denote respectively $F$ and $G$ the functors defined by:
\[\begin{array}{ccccc}
F:& \Pc erv_{\Sigma_0} &\longrightarrow & \catLcSh_{S}\\
& \Fc &\longmapsto & (R^{-d-1}\Gamma_{\Ll}Rj_{*}\Fc)\mid_{S}\\
G:& \Pc erv_{\Sigma_0} &\longrightarrow & \catLcSh_{S}\\
& \Fc &\longmapsto & (R^{-d}\Gamma_{\Kc}Rj_{*}\Fc)\mid_{S}\\
\end{array}\]
We denote $T_{\Fc}$ the natural transformation:
\[ T_{\Fc}:  (R^{-d-1}\Gamma_{\Ll}Rj_{*}\Fc)\mid_{S}\longrightarrow  (R^{-d}\Gamma_{\Kc}Rj_{*}\Fc)\mid_{S}\]
\end{Def}
The aim of this paper is to show that the category $\Cc(F,G,T)$ is equivalent to the category $\Pc erv_{\Sigma}$, defining a couple of quasi-inverse functors. Let us remark that as the functors $F$ and $G$ are different, the category $\Cc(F,G,T)$ is different from the one given by MacPherson and Vilonen, but the difference is only on the gluing data.  

In order to define these functors, we need to prove this lemma on perverse closed set.
\begin{lem}
Let $\Kc$ be a closed set and $\Ll$ its complement in $X$.
\begin{itemize}\label{ferme pervers}
\item[(i)] The closed set $\Kc$ is a perverse closed set if and only if for all perverse sheaf $\Fc \in \Pc erv_{\Sigma}$ we have:
\begin{itemize}
\item $\forall k<-d, R^{k}\Gamma_{\Kc}\Fc=0$
\item $\forall k\geq -d, R^{k}\Gamma_{\Ll}\Fc=0$
\end{itemize}
\item[(ii)] If $\Kc$ is a perverse closed set then for any perverse sheaves $\Fc\in \Pc erv_{\Sigma}$ we have:
\[R\Gamma_{\Kc}\Fc\simeq R^{-d}\Gamma_{\Kc}\Fc[d] \]
\end{itemize}
\end{lem}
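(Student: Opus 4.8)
The plan is to treat (i) as a purely local statement along $S$ and to use the attaching triangle as the bridge between a perverse sheaf on $X$ and its restriction to $X_0$. First I would observe that both conditions only concern the cohomology sheaves $R^k\Gamma_{\Kc}(-)$ and $R^k\Gamma_{\Ll}(-)$ over $S$: writing $\Gc:=j^{-1}\Fc\in\Pc erv_{\Sigma_0}$, one has $(Rj_*\Gc)|_{X_0}=\Gc=\Fc|_{X_0}$, and $R\Gamma_{\Kc}$, $R\Gamma_{\Ll}$ commute with restriction to the open set $X_0$, so the two sets of conditions coincide over $X_0$. The comparison over $S$ is governed by the attaching triangle $i_*i^!\Fc\to\Fc\to Rj_*j^{-1}\Fc$, whose error term $i_*i^!\Fc$ is supported on $S$ and, by the cosupport part of perversity on the $d$-dimensional stratum, satisfies $H^k(i^!\Fc)=0$ for $k<-d$; equivalently $i_*i^!\Fc$ is concentrated in degrees $\ge-d$.

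Next I would record that a perverse closed set must satisfy $\Ll\cap S=\emptyset$, for each of the two characterizations. For the ``$\Fc$'' side this is Remark \ref{remarque ferme pervers}, obtained by testing against $\CC_S[d]$; for the ``$Rj_*$'' side one tests the condition $R^{-d}\Gamma_{\Ll}Rj_*\Gc=0$ at a point $x\in\Ll\cap S$, using $(R^k\Gamma_{\Ll}Rj_*\Gc)_x\simeq(R^kj_*\Gc)_x$, against a perverse sheaf $\Gc$ (for instance the intermediate extension of a constant sheaf) whose link cohomology at $x$ is non-zero in degree $-d$. Once $S\subseteq\Kc$ is known, the error term $i_*i^!\Fc$ is supported inside $\Kc$, hence $R\Gamma_{\Ll}(i_*i^!\Fc)=0$; applying $R\Gamma_{\Ll}$ to the attaching triangle then yields $R\Gamma_{\Ll}\Fc\simeq R\Gamma_{\Ll}Rj_*\Gc$, so the $\Ll$-conditions for $\Fc$ and for $Rj_*\Gc$ are literally the same.

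For the $\Kc$-condition I would apply $R\Gamma_{\Kc}$ to the attaching triangle. Since $i_*i^!\Fc$ lies in degrees $\ge-d$, so does $R\Gamma_{\Kc}(i_*i^!\Fc)$, and the long exact sequence gives isomorphisms $R^k\Gamma_{\Kc}\Fc\simeq R^k\Gamma_{\Kc}Rj_*\Gc$ for $k\le-d-2$ and, in the remaining degree $k=-d-1$, an exact sequence whose only obstruction is $H^{-d}(i^!\Fc)$. In the direction ``$Rj_*\Rightarrow\Fc$'' the vanishing of $R^{-d-1}\Gamma_{\Kc}Rj_*\Gc$ forces that of $R^{-d-1}\Gamma_{\Kc}\Fc$ directly. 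In the reverse direction, since $R\Gamma_{\Kc}Rj_*\Gc$ depends only on $\Gc$, I may compute it from any perverse extension of $\Gc$; choosing the intermediate extension $\Fc=j_{!*}\Gc$, whose strict cosupport gives $H^{-d}(i^!j_{!*}\Gc)=0$, removes the obstruction and yields the vanishing of $R^{-d-1}\Gamma_{\Kc}Rj_*\Gc$. The homological bookkeeping here is exactly lemma \ref{homologique}.

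Finally, for (ii) I would combine (i) with the support condition. By (i), $R\Gamma_{\Kc}\Fc$ is already concentrated in degrees $\ge-d$. For the opposite bound, pass to cohomology in the triangle $R\Gamma_{\Kc}\Fc\to\Fc\to R\Gamma_{\Ll}\Fc$: the support condition on every stratum (all of dimension $\ge d$) gives $H^k(\Fc)=0$ for $k>-d$, while (i) gives $R^{k-1}\Gamma_{\Ll}\Fc=0$ for $k>-d$, so the long exact sequence forces $R^k\Gamma_{\Kc}\Fc=0$ for $k>-d$, whence $R\Gamma_{\Kc}\Fc\simeq R^{-d}\Gamma_{\Kc}\Fc[d]$. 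The delicate point throughout is the behaviour over $S$ in the single degree $-d$: the cosupport term $i_*i^!\Fc$ has its bottom cohomology precisely there, and it is controlled only by invoking perversity sharply --- through the strict cosupport of the intermediate extension --- together with lemma \ref{homologique}. This boundary analysis is the main obstacle.
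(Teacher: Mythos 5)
Your proposal follows the same route as the paper's (very terse) proof: the implication from the definition to the conditions on $\Pc erv_{\Sigma}$, and part (ii), come from localization and attaching triangles combined with the support and cosupport conditions, while the converse comes from extending a perverse sheaf on $X_{0}$ to a perverse sheaf on $X$. On the converse your write-up is in fact more precise than the paper's: you observe that with an arbitrary perverse extension the long exact sequence leaves the obstruction $H^{-d}(i^!\Fc)$ in the critical degree $k=-d-1$, and that one should take $\Fc=j_{!*}\Gc$, whose strict cosupport kills that term; the paper only invokes the existence of some perverse extension, which by itself does not settle this boundary degree. Your argument for (ii) is correct. (The closing appeals to lemma \ref{homologique} are decorative: everything you use is a long exact sequence, and that lemma, about uniqueness of fill-in morphisms between triangles, plays no role here.)

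There is, however, one genuine gap: the step where you deduce $\Ll\cap S=\emptyset$ from the \emph{definition} of a perverse closed set (your ``$Rj_*$ side''). You propose to test, at a point $x\in\Ll\cap S$, a perverse sheaf $\Gc$ on $X_{0}$ ``whose link cohomology at $x$ is non-zero in degree $-d$'', offering the intermediate extension of a constant sheaf as an example. Such a $\Gc$ need not exist. Take $X=\CC^{2}$ stratified by $S=\{0\}$ and $X_{0}=\CC^{2}\setminus\{0\}$, so $d=0$, with the middle-perversity normalization used in the paper's examples (perverse sheaves on $X_{0}$ are shifted local systems $L[2]$). Since $X_{0}$ is simply connected, every perverse sheaf on $X_{0}$ is a direct sum of copies of $\CC_{X_{0}}[2]$, and $(Rj_*\CC_{X_{0}}[2])_{0}\simeq R\Gamma(S^{3},\CC)[2]$ has cohomology only in degrees $-2$ and $1$: nothing in degree $-d=0$. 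What you actually need --- and what does hold here, in degree $1$ --- is nonvanishing in \emph{some} degree $\geq -d$; generically this occurs in the top degree of the link and is forced by duality, not in degree $-d$, so the step must be restated and proved in that form; as written it fails. The point matters because it is exactly where the paper itself is weakest: Remark \ref{remarque ferme pervers} tests $\CC_{S}[d]$, which is perverse on $X$ but not of the form $Rj_*\Gc$, so it yields $\Ll\cap S=\emptyset$ only under the conditions of part (i) of the lemma; relying on it inside the forward implication --- as one is forced to do once your test sheaf is withdrawn --- is circular.
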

\begin{proof}
The first implication of $(i)$ and $(ii)$ can both be shown using the perversity conditions and the triangle:
\[R\Gamma_{\Kc}\Fc\rightarrow \Fc\rightarrow R\Gamma_{\Ll}\Fc\]
The second implication of $(i)$ can be deduced from the fact that for each perverse sheaf $\Fc$ on $X_{0}$ there exists a perverse sheaf on $X$ such that its restriction to $X_{0}$ is isomorphic to $\Fc$.
\end{proof}
Let us define the functor $\Cc$ from the category $\Pc erv_{\Sigma}$ to the category $\Cc(F,G,T)$. This definition is similar to the one given by MacPherson and Vilonen in \cite{McV}, the only change comes from the different definition of the functors $F$ and $G$. \\
Let $\Fc$ be a perverse sheaf on $X$. It follows from the remark \ref{remarque ferme pervers} that the natural morphism:
\[R\Gamma_{\Ll}\Fc\longrightarrow R\Gamma_{\Ll}Rj_{*}j^{-1}\Fc \]
is an isomorphism. Hence if we denote $u$ the composition of the inverse of this isomorphism with the natural morphism:
\[ i^{-1}R^{-d-1}\Gamma_{\Ll}\Fc\longrightarrow i^{-1}R^{-d}\Gamma_{\Kc}\Fc\]
and $v$ the natural morphism:
\[ i^{-1}R^{-d-1}\Gamma_{\Kc}\Fc\longrightarrow i^{-1}R^{-d}\Gamma_{\Kc}Rj_{*}j^{-1}\Fc, \]
then the diagram commutes: 
\[\xymatrix{
F(i^{-1}\Fc) \ar[rr]^{T} \ar[dr]_{u}& & G(i^{-1}\Fc)\\
&  i^{-1}R^{-d-1}\Gamma_{\Kc}\Fc \ar[ur]_{v}
}\]
Moreover, all these operations are functorial.
\begin{Def}
We denote $\Cc$ the functor going from $\Pc erv_{\Sigma}$ to $\Cc(F,G,T)$ defined, with the above notation, by:
\[\begin{array}{cccc}
\Cc: & \Pc erv_{\Sigma} & \longrightarrow & \Cc(F,G,T)\\
&\Fc& \longmapsto &(j^{-1}\Fc,i^{-1}R^{-d}\Gamma_{\Kc}\Fc,u,v)
\end{array}\]
\end{Def}
We are now going to define the functor $\Pc:\Cc(F,G,T) \rightarrow \Pc erv_{\Sigma}$ quasi-inverse of $\Cc$. But let us first give a general idea of the construction. Let $(\Gc, \Ll, u,v)$ be an object of $\Cc(F,G,T)$. Let us recall that $\Gc$ is a perverse sheaf on $X_{0}$, $\Ll$ a locally constant sheaf on $S$ and $u$ and $v$ are morphisms of sheaves such that the following diagram commutes:
\[\xymatrix{
i^{-1}R^{-d-1}\Gamma_{\Ll}Rj_{*}\Gc \ar[rr]^{T} \ar[rd]_{u} &&i^{-1}R^{-d}\Gamma_{\Kc}Rj_{*}\Gc\\
& \Ll\ar[ur]_{v}
}\]
Starting from this data and using the ``gluing'' functor $G_{S}$ defined in the preceding section we define a morphism of sheaves, denoted $\phi$ from $R^{-d-1}\Gamma_{\Ll}Rj_{*}\Gc$ to a sheaf $\Bc$. Then as $R\Gamma_{\Ll}Rj_{*}\Gc$ is concentrated in degree less than $-d-1$ we have:
\[ Hom_{\catSh_{X}}(R^{-d-1}\Gamma_{\Ll}Rj_{*}\Gc, \Bc)\simeq Ext^{1}(R\Gamma_{\Ll}Rj_{*}\Gc, \Bc[-d]) \]
Hence the morphism $\phi$ defined a morphism $\Phi$ of complexes of sheaves. 
The perversity conditions of the closed set $\Kc$ and of the perverse sheaf $\Gc$ allow us to apply the following lemma to show that the cone of such a morphism is unique.
\begin{lem}\label{homologique}
Let $K$ be a triangulated category. Let us consider the commutative diagram:
\[\xymatrix{ A\ar[r]^{u} \ar@{.>}[d]_{\alpha} & B\ar[r]^{v} \ar[d]_{\beta} & C\ar[r]^{w} \ar[d]_{\gamma}  &A[1]  \ar@{.>}[d]_{\alpha[1]}\\
A' \ar[r]_{u'} & B'\ar[r]_{v'} & C'\ar[r]_{w'}& A'[1]}\]
then there exists a morphism $\alpha$ such that the diagram commutes. If moreover we have:
\[ Hom_{K}(B, C'[-1])=Hom_{K}(C,B')=0 \]
then the morphism $\alpha$ is unique. 
\end{lem}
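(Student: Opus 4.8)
The plan is to treat this as the standard completion-and-uniqueness statement for morphisms of distinguished triangles: existence comes from the completion axiom of triangulated categories (the one producing the third vertical arrow in a morphism of triangles) after a single rotation, and uniqueness comes from a diagram chase through the long exact $\mathrm{Hom}$-sequences of the two triangles, in which each vanishing hypothesis is consumed exactly once.

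\emph{Existence.} The solid data are $\beta\colon B\to B'$ and $\gamma\colon C\to C'$, and commutativity of the only square not involving $\alpha$ means $v'\beta=\gamma v$. I would rotate both triangles one step to the left, obtaining the distinguished triangles
\[ B\to[v] C\to[w] A[1]\to[{-u[1]}] B[1],\qquad B'\to[v'] C'\to[w'] A'[1]\to[{-u'[1]}] B'[1]. \]
Relative to these, $(\beta,\gamma)$ is a compatible pair on the first two vertices, since $v'\beta=\gamma v$, so the completion axiom furnishes a morphism $\psi\colon A[1]\to A'[1]$ making $(\beta,\gamma,\psi)$ a morphism of the rotated triangles; explicitly $w'\gamma=\psi w$ and $(-u'[1])\psi=\beta[1](-u[1])$, i.e. $u'[1]\psi=\beta[1]u[1]$. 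Setting $\alpha:=\psi[-1]$, so that $\alpha[1]=\psi$, the first relation is exactly commutativity of the right square, while the second reads $(u'\alpha)[1]=(\beta u)[1]$; as the shift functor is an equivalence this yields $u'\alpha=\beta u$, the left square. With the given middle square this shows the whole diagram commutes.

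\emph{Uniqueness.} Suppose $\alpha,\alpha'$ both complete the diagram and put $\delta:=\alpha-\alpha'\colon A\to A'$. Subtracting the two left squares gives $u'\delta=0$, and subtracting the two right squares gives $\delta[1]w=0$, equivalently $\delta\, w[-1]=0$ with $w[-1]\colon C[-1]\to A$. I would chase these two relations as follows. Applying $\mathrm{Hom}_{K}(-,A')$ to the rotated triangle $C[-1]\to[{-w[-1]}] A\to[u] B\to[v] C$ and using exactness at $\mathrm{Hom}_{K}(A,A')$, the vanishing $\delta\, w[-1]=0$ produces a factorization $\delta=\theta u$ with $\theta\colon B\to A'$. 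Substituting into $u'\delta=0$ gives $(u'\theta)u=0$; applying $\mathrm{Hom}_{K}(-,B')$ to $A\to[u] B\to[v] C$ writes $u'\theta=\mu v$ for some $\mu\colon C\to B'$, and the hypothesis $\mathrm{Hom}_{K}(C,B')=0$ forces $\mu=0$, hence $u'\theta=0$. Finally, applying $\mathrm{Hom}_{K}(B,-)$ to $A'\to[u'] B'\to[v'] C'$ and using exactness at $\mathrm{Hom}_{K}(B,A')$ writes $\theta=w'[-1]\nu$ for some $\nu\colon B\to C'[-1]$, and $\mathrm{Hom}_{K}(B,C'[-1])=0$ forces $\nu=0$, so $\theta=0$ and therefore $\delta=\theta u=0$.

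The existence half is essentially axiom bookkeeping, so the only care needed there is tracking the rotation and the signs on the connecting maps. The real content, and the step I expect to need the most attention, is the uniqueness chase: one must route the two conditions $u'\delta=0$ and $\delta\,w[-1]=0$ through the correct long exact sequences — one covariant and one contravariant, one per triangle — so that the factorizations $\delta=\theta u$ and $u'\theta=\mu v$ land at exactly the exactness points where $\mathrm{Hom}_{K}(C,B')=0$ and $\mathrm{Hom}_{K}(B,C'[-1])=0$ can each be applied once to kill the obstruction.
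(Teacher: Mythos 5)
Your proof is correct, and both halves are sound: existence follows from the rotation axiom plus the completion axiom exactly as you set it up (the signs on $-u[1]$ and $-u'[1]$ cancel when you descend to $u'\alpha=\beta u$), and your uniqueness chase routes $\delta=\alpha-\alpha'$ through the correct exact sequences, consuming $\mathrm{Hom}_{K}(C,B')=0$ to force $u'\theta=0$ and $\mathrm{Hom}_{K}(B,C'[-1])=0$ to force $\theta=0$. Note that the paper gives no argument of its own for this lemma --- its proof is literally the citation ``See \cite{Mai}'' --- so your self-contained argument is not a different route but precisely the standard one the reference is being invoked for.
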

\begin{proof}
See \cite{Mai}.
\end{proof}
Hence we define the image of $(\Gc, \Ll, u,v)$ by $\Pc$  to be the cone of $\Phi$. Moreover this lemma shows that taking the cone of such a morphism is functorial.

Now let us define properly the functor $\Pc$. Let  $\gimel:=(\Gc, \Ll, u,v)$ be an object of $\Cc(F,G,T)$. The family $(\Ll, j^{-1}R^{-d}\Gamma_{\Kc}Rj_{*}\Gc, \tilde{v})$ where $\tilde{v}$ is the composition of the morphism $v$ with the adjonction morphism $\eta$:
\[ \eta: i^{-1}R^{-d}\Gamma_{\Kc}Rj_{*}\Gc\longrightarrow i^{-1}j_{*}j^{-1}R^{-d}\Gamma_{\Kc}Rj_{*}\Gc\]
is an object of $\cat S_{S}$ in the same way the couple $(u,\delta)$ where $\delta$ comes from the natural triangle is a morphism in $\cat S_{S}$ from $(i^{-1}R^{-d-1}\Gamma_{\Ll}Rj_{*}\Gc,$ $j^{-1}R^{-d-1}\Gamma_{\Ll}j_{*}\Gc)$ to $(\Ll, j^{-1}R^{-d}\Gamma_{\Kc}Rj_{*}\Gc, \tilde{v})$. Let us denote $\Bc_{\gimel}$ the sheaf $G_{S}\big((\Ll,  j^{-1}R^{-d}\Gamma_{\Kc}Rj_{*}\Gc, \tilde{v})\big)$ and $\phi_{\gimel}$ the morphism $G_{S}\big((u, \delta)\big)$:
\[\phi_{\gimel}:=G_{S}\big((u, \delta)\big): R^{-d-1}\Gamma_{\Ll}Rj_{*}\Gc\longrightarrow \Bc_{\gimel}\] 
As $R\Gamma_{\Ll}Rj_{*}\Gc$ is concentrated in degree smaller than $-d-1$ the two complexes $R\Gamma_{\Ll}Rj_{*}\Gc$ and $\tau^{\leq-d-1}R\Gamma_{\Ll}Rj_{*}\Gc$ are isomorphic. Let us denote $\Phi_{\gimel}$ the morphism of complexes defined by the composition of the natural morphisms and $\phi$ shifted by $-d-1$:
\[R\Gamma_{\Ll}Rj_{*}\Gc\buildrel\sim\over\rightarrow \tau^{\leq-d-1}R\Gamma_{\Ll}Rj_{*}\Gc\rightarrow R^{-d-1}\Gamma_{\Ll}Rj_{*}\Gc[-d-1]\buildrel\phi_{\gimel}[d+1]\over\rightarrow\Bc_{\gimel}[-d]\]
If $(g,l)$ is a morphism in $\cat S_{S}$ from $\gimel=(\Gc, \Ll, m,n)$ to $\gimel'=(\Gc', \Ll', m',n')$ we can associate using the gluing functor $G_{S}$ a morphism $b_{(g,l)}$ from $\Bc_{\gimel}$ to $\Bc_{\gimel'}$ such that the diagram commutes:
 \[\xymatrix{
 R\Gamma_{\Ll}Rj_{*}\Gc\ar[r]^{\Phi_{\gimel}}\ar[d]_{R\Gamma_{\Ll}Rj_{*}g} & \Bc_{\gimel}\ar[d]^{b_{(g,l)}}\\
 R\Gamma_{\Ll}Rj_{*}\Gc' \ar[r]_{\gimel} & \Bc_{\gimel'}
 }\]
 Moreover we have $b_{(id,id)}=Id$ and if $(g,l)$ and $(g',l')$ are two composable morphisms then we have $b_{(g,l)}\circ b_{(g',l')}=b_{(g\circ g',l\circ l')}$
The following lemma is the central point of the definition of the functor $\Pc$.
 \begin{lem}\label{unicitŽ cone}
 The cone of $\Phi_{\gimel}$ is unique and moreover the application  which associate this cone to an object of $\Cc(F,G,T)$ is functorial.  \end{lem}
 \begin{proof}
 First, let us fix, for every object $\gimel$ of $\Cc(F,G,T)$ a cone $\Pc(\gimel)$ of $\Phi_{\gimel}$.
 To show the lemma it suffice to show that if $(g, l)$ is a morphism in $\cat S_{S}$ from $\gimel=(\Gc, \Ll, u,v)$ to $\gimel'=(\Gc', \Ll', u',v')$, then there exist a unique morphism such that the diagram commutes:
 \begin{equation}\label{diagram morphism}
 \xymatrix{
 R\Gamma_{\Ll}Rj_{*}\Gc \ar[r] \ar[d]_{R\Gamma_{\Ll}Rj_{*}g} & \Bc_{\gimel}[d] \ar[r] \ar[d]_{b_{(g,l)}} & \Pc (\gimel)\ar@{.>}[d]\\
 R\Gamma_{\Ll}Rj_{*}\Gc' \ar[r]  & \Bc_{\gimel'}[d] \ar[r]  & \Pc (\gimel') 
 }\end{equation}
By definition of a triangulated category we know that such a morphism exists then let us show that it is unique. Because of the degree of the complexes we have:
\[ Hom(\Bc[d], R\Gamma_{\Ll}Rj_{*}\Gc')=0\]
Moreover, as $Rj_{\Ll*}$ is a left adjoint of $j^{-1}_{\Ll}$ we have:
\[Hom(R\Gamma_{\Ll}Rj_{*}\Gc, \Bc'[d])=Hom(j_{\Ll}^{-1}Rj_{*}\Gc, j_{\Ll}^{-1}\Bc'[d])\]
but $\Bc'$ is supporting by $\Kc$, hence the complex is null. Then the lemma \ref{homologique} applies. We denote $\Pc\big((g,l)\big)$ this morphism.
 \end{proof}
 \begin{Def}
Using the above notation, we denote $\Pc$ the functor going from the category $\Cc(F,G,T)$ to the derived category of sheaves with constructible cohomology  which associate:
\begin{itemize}
\item  to an object $\gimel$ of $\Cc(F,G,T)$ the cone of the morphism $\Phi_{\gimel}$,
\item to a morphism $(g,l):\gimel\rightarrow \gimel'$ in $\Cc(F,G,T)$ the unique morphism such that the diagram \ref{diagram morphism} commutes.
\end{itemize}
 \end{Def}
 The following lemma is going to be very useful.
 \begin{lem}\label{pervers}
 Let $\gimel=(\Gc, \Ll, u,v)$ be an object of $\Cc(F,G,T)$, then the restriction of $\Pc(\gimel)$ to $X_{0}$ is naturally isomorphic to $\Gc$.
 \end{lem}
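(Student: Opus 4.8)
The plan is to exploit the exactness of $j^{-1}$ together with the observation that, over $X_{0}$, all the data entering the definition of $\Pc(\gimel)$ degenerate to the fundamental triangle attached to the closed set $\Kc\inter X_{0}$. Since $j^{-1}$ is a triangulated functor it commutes with the cone, so that
\[ j^{-1}\Pc(\gimel)\simeq\operatorname{cone}\big(j^{-1}\Phi_{\gimel}\big), \]
and it suffices to identify the source, the target and the morphism $j^{-1}\Phi_{\gimel}$. First I would record the elementary simplifications valid on $X_{0}$: by Remark~\ref{remarque ferme pervers} one has $\Ll\inter S=\emptyset$, hence $\Ll\subseteq X_{0}$ and $\Kc\inter X_{0}$ is closed in $X_{0}$ with complement $\Ll$; moreover $j^{-1}Rj_{*}\Gc\simeq\Gc$ since $j$ is an open embedding. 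Using that $R\Gamma$ of a closed (resp.\ open) subset commutes with restriction to an open set containing it, this gives $j^{-1}R\Gamma_{\Ll}Rj_{*}\Gc\simeq R\Gamma_{\Ll}\Gc$ and $j^{-1}R\Gamma_{\Kc}Rj_{*}\Gc\simeq R\Gamma_{\Kc\inter X_{0}}\Gc$ on $X_{0}$.

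Next I would identify the target. By the gluing equivalence $\catSh_{X}\simeq\cat S_{S}$, the functor $G_{S}$ is quasi-inverse to the restriction functor $R_{S}$, so the restriction to $X_{0}$ of a glued sheaf is its $X_{0}$-component; applied to $\Bc_{\gimel}=G_{S}\big((\Ll,j^{-1}R^{-d}\Gamma_{\Kc}Rj_{*}\Gc,\tilde v)\big)$ this yields $j^{-1}\Bc_{\gimel}\simeq R^{-d}\Gamma_{\Kc\inter X_{0}}\Gc$. The key point is that this single sheaf actually represents the whole complex $R\Gamma_{\Kc\inter X_{0}}\Gc$: choosing, as in the proof of Lemma~\ref{ferme pervers}(i), a perverse sheaf $\tilde\Gc$ on $X$ with $\tilde\Gc\mid_{X_{0}}\simeq\Gc$, Lemma~\ref{ferme pervers}(ii) gives $R\Gamma_{\Kc}\tilde\Gc\simeq R^{-d}\Gamma_{\Kc}\tilde\Gc[d]$, and restricting to $X_{0}$ shows that $R\Gamma_{\Kc\inter X_{0}}\Gc$ is concentrated in degree $-d$, i.e.\ $R\Gamma_{\Kc\inter X_{0}}\Gc\simeq R^{-d}\Gamma_{\Kc\inter X_{0}}\Gc[d]$.

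The heart of the argument is to recognise $j^{-1}\Phi_{\gimel}$ as the connecting morphism $w$ of the fundamental triangle
\[ R\Gamma_{\Kc\inter X_{0}}\Gc\longrightarrow\Gc\longrightarrow R\Gamma_{\Ll}\Gc\buildrel w\over\longrightarrow R\Gamma_{\Kc\inter X_{0}}\Gc[1]. \]
Here the defining vanishing of the perverse closed set, preserved by the exact functor $j^{-1}$, forces $R\Gamma_{\Ll}\Gc$ to lie in degrees $\leq -d-1$, while by the previous step $R\Gamma_{\Kc\inter X_{0}}\Gc[1]$ sits in the single degree $-d-1$. Consequently the identification $\mathrm{Hom}(R^{-d-1}\Gamma_{\Ll}\Gc,R^{-d}\Gamma_{\Kc\inter X_{0}}\Gc)\simeq\mathrm{Hom}_{\BDC}(R\Gamma_{\Ll}\Gc,R\Gamma_{\Kc\inter X_{0}}\Gc[1])$ used to build $\Phi_{\gimel}$ restricts, over $X_{0}$, to an isomorphism of morphism groups, under which the sheaf map $\delta=j^{-1}\phi_{\gimel}$ — which by construction ``comes from the natural triangle'' — corresponds precisely to $w$. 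Granting this, the triangle above exhibits $j^{-1}\Pc(\gimel)=\operatorname{cone}(j^{-1}\Phi_{\gimel})$ as isomorphic to $\Gc$, the shifts incorporated in the definition of $\Phi_{\gimel}$ being arranged exactly so that no residual shift survives.

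Finally, naturality is obtained by running the same identifications for a morphism $(g,l)\colon\gimel\to\gimel'$: the functoriality of $G_{S}$, of $j^{-1}$ and of the fundamental triangle in $\Gc$, together with the uniqueness statement of Lemma~\ref{homologique}, forces the square relating $j^{-1}\Pc\big((g,l)\big)$ and $g$ to commute, so the isomorphism $j^{-1}\Pc(\gimel)\simeq\Gc$ is natural in $\gimel$. I expect the main obstacle to be the third step, namely making the comparison of $j^{-1}\Phi_{\gimel}$ with $w$ precise: checking that $\delta$ is genuinely the top-degree component of the connecting map, and tracking the shifts through both the $\mathrm{Hom}$–$\mathrm{Ext}^{1}$ dictionary and the gluing functor. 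The concentration statement of the second step is the technical input that turns this dictionary into an honest isomorphism over $X_{0}$.
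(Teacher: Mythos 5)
Your proof is correct and takes essentially the same route as the paper: restrict to $X_{0}$, use the quasi-inverse pair $(R_{S},G_{S})$ to identify $j^{-1}\Bc_{\gimel}$ with $j^{-1}R^{-d}\Gamma_{\Kc}Rj_{*}\Gc$, recognise $j^{-1}\Phi_{\gimel}$ as the connecting morphism of the fundamental triangle over $X_{0}$, and conclude with the uniqueness statement of Lemma~\ref{homologique}. You even make explicit two points the paper leaves implicit, namely the concentration of $R\Gamma_{\Kc\cap X_{0}}\Gc$ in degree $-d$ (obtained from an extension of $\Gc$ to a perverse sheaf on $X$ together with Lemma~\ref{ferme pervers}(ii)) and the truncation argument showing that the sheaf-level map $\delta$ determines the connecting morphism.
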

 \begin{proof}
 First the complex $j^{-1}\Pc(\gimel)$ is naturally isomorphic to the cone of the morphism $j^{-1}\Phi_{\gimel}$. Then let us remark that because of the perversity condition of the closed set $\Kc$ the complex $j^{-1}Rj_{*}\Gc$ is the unique cone of the morphism:
 \[ j^{-1}R\Gamma_{\Ll}Rj_{*}\Gc \buildrel[+1]\over\longrightarrow j^{-1}R\Gamma_{\Kc}Rj_{*}\Gc[1]\]
 In view of the definition of $\Bc_{\gimel}$ and as the functors $R_{S}$ and $G_{S}$ are quasi-inverse, there exists a natural isomorphism between $j^{-1}\Bc_{\gimel}$ and $j^{-1}R\Gamma_{\Kc}Rj_{*}\Gc$ such that the following diagram commutes:
 \[\xymatrix{
 j^{-1}R\Gamma_{\Ll}Rj_{*}\Gc \ar[rrr]^-{j^{-1}\Phi} \ar@{=}[d]&&&j^{-1}\Bc_{\gimel}[d] \eq[d] \\
j^{-1}R\Gamma_{\Ll}Rj_{*}\Gc \ar[rrr]_{[1]} &&& j^{-1}R\Gamma_{\Kc}Rj_{*}\Gc [1]
 }\]
As in the proof of lemma  \ref{unicitŽ cone}, we show that the lemma \ref{homologique} applies. Hence there exists a natural isomorphism between $j^{-1}\Pc (\gimel)$ and $j^{-1}Rj_{*}\Gc$. It remains to say that $j^{-1}Rj_{*}\Gc$ is naturally isomorphic to $\Gc$.
 \end{proof}
\begin{prop}
The functor $\Pc$ takes values in the category $\Pc erv_{\Sigma}$.
\end{prop}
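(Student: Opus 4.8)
The plan is to verify, for an arbitrary object $\gimel=(\Gc,\Ll,u,v)$ of $\Cc(F,G,T)$, that the complex $\Pc(\gimel)$ satisfies the support and cosupport conditions stratum by stratum. By Lemma \ref{pervers} the restriction $j^{-1}\Pc(\gimel)$ is isomorphic to the perverse sheaf $\Gc$, so both conditions already hold on every stratum contained in $X_{0}$; it remains only to check them along the closed stratum $S$, i.e.\ that $i^{-1}\Pc(\gimel)$ is concentrated in degrees $\leq -d$ and that $i^{!}\Pc(\gimel)$ is concentrated in degrees $\geq -d$ (the bounds realised by the perverse sheaf $\CC_{S}[d]$). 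In both cases I would apply the relevant functor to the distinguished triangle
\[ R\Gamma_{\Ll}Rj_{*}\Gc \xrightarrow{\Phi_{\gimel}} \Bc_{\gimel}[d] \longrightarrow \Pc(\gimel) \xrightarrow{+1} \]
which defines $\Pc(\gimel)$ as a cone.

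For the support condition I would restrict this triangle to $S$. The functor $i^{-1}$ is exact, and by the definition of a perverse closed set $R^{k}\Gamma_{\Ll}Rj_{*}\Gc=0$ for $k\geq -d$, so $i^{-1}R\Gamma_{\Ll}Rj_{*}\Gc$ lives in degrees $\leq -d-1$. Since $R_{S}$ and $G_{S}$ are quasi-inverse, the restriction of the glued sheaf to $S$ is $i^{-1}\Bc_{\gimel}\simeq\Ll$, so $i^{-1}(\Bc_{\gimel}[d])\simeq\Ll[d]$ sits in the single degree $-d$. The cone of a morphism from a complex in degrees $\leq -d-1$ to one in degree $-d$ is concentrated in degrees $\leq -d$, which is the support condition; reading the long exact sequence one moreover recovers $\Hc^{-d}(i^{-1}\Pc(\gimel))\simeq\Ll$, as expected.

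For the cosupport condition the decisive point is the vanishing $i^{!}R\Gamma_{\Ll}Rj_{*}\Gc=0$. Writing $R\Gamma_{\Ll}Rj_{*}\Gc=Rj_{\Ll*}j_{\Ll}^{-1}Rj_{*}\Gc$ and using $S\cap\Ll=\emptyset$ from Remark \ref{remarque ferme pervers}, the closed embedding $i$ factors through $\Kc$, and the recollement identity $i_{\Kc}^{!}Rj_{\Ll*}=0$ for the open immersion $j_{\Ll}$ with closed complement $\Kc$ gives $i^{!}R\Gamma_{\Ll}Rj_{*}\Gc=0$. Applying $i^{!}$ to the defining triangle therefore produces an isomorphism $i^{!}\Pc(\gimel)\simeq(i^{!}\Bc_{\gimel})[d]$. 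As $\Bc_{\gimel}$ is an ordinary sheaf (in degree $0$) and $i^{!}$ is left t-exact, $i^{!}\Bc_{\gimel}$ is concentrated in degrees $\geq 0$, so $i^{!}\Pc(\gimel)$ is concentrated in degrees $\geq -d$, which is the cosupport condition.

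Combining the two computations with Lemma \ref{pervers} shows that $\Pc(\gimel)$ meets the support and cosupport conditions on every stratum, hence is perverse; one should also note that $\Pc(\gimel)$ is $\Sigma$-constructible, since all the operations used preserve constructibility on a Thom--Mather space. I expect the cosupport step to be the main obstacle: everything hinges on the vanishing $i^{!}R\Gamma_{\Ll}Rj_{*}\Gc=0$, and hence on the disjointness $S\cap\Ll=\emptyset$ guaranteed by Remark \ref{remarque ferme pervers}. This is exactly where working with a perverse closed set, rather than a perverse link, pays off.
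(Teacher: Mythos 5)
Your proof is correct and follows essentially the same route as the paper: the restriction to $X_{0}$ is handled by Lemma \ref{pervers}, and the conditions along $S$ are read off from the defining triangle $R\Gamma_{\Ll}Rj_{*}\Gc \to \Bc_{\gimel}[d] \to \Pc(\gimel)$ using the disjointness $S\cap\Ll=\emptyset$ of Remark \ref{remarque ferme pervers} (your recollement vanishing $i^{!}Rj_{\Ll *}=0$ is exactly the paper's vanishing of $R\Gamma_{S}R\Gamma_{\Ll}Rj_{*}\Gc$, and your left t-exactness of $i^{!}$ applied to the sheaf $\Bc_{\gimel}$ is its observation that $R\Gamma_{S}\Bc_{\gimel}[d]$ sits in degrees $\geq -d$). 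You are in fact more thorough than the paper, whose written proof checks only the open-stratum condition and the cosupport condition and silently omits the support condition that $i^{-1}\Pc(\gimel)$ be concentrated in degrees $\leq -d$; you establish it cleanly from the bound $R^{k}\Gamma_{\Ll}Rj_{*}\Gc=0$ for $k\geq -d$ together with $i^{-1}\Bc_{\gimel}\simeq\Ll$.
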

\begin{proof}
Let $\gimel=(\Gc, \Ll, u,v)$ be an object of $\Cc(F,G,T)$. The proof is based on the following result showed in \cite{BBD}.
\begin{prop}
Let $Y$ be a closed submanifold of $X$ of dimension $d$ and $U$ its complement in $X$. Let us denote $i$ and $j$ the embeddings $i:Y\hookrightarrow X$ and $j:U\hookrightarrow X$. Let $\Fc$ be a complex with constructible cohomology, then $\Fc$ is a perverse sheaves if and only if:
\begin{itemize}
\item $j^{-1}\Fc$ is a perverse sheaves,
\item $i^{-1}\Fc$ is concentrated in degrees lesser than $-d$,
\item $(R\Gamma_{Y}\Fc)\mid_{Y}$ is concentrated in degrees bigger than $-d$.
\end{itemize}
\end{prop}
We have already seen (lemma \ref{pervers}) that the complex $j^{-1}\Pc (\gimel)$ is perverse.\\
Let us consider the complex $R\Gamma_{S}\Pc(\gimel)$. It is isomorphic to the cone of the morphism $R\Gamma_{S}\Phi_{\gimel}$ defined before:
\[R\Gamma_{S}\Phi_{\gimel}:R\Gamma_{S}R\Gamma_{\Ll}Rj_{*}\Pc(\gimel)\longrightarrow R\Gamma_{S}\Bc[d]\]
But according to remark \ref{remarque ferme pervers}, $S\cap \Ll=\emptyset$, the first complex is null. This shows that $R\Gamma_{S}\Pc(\gimel)$ is concentrated in degrees bigger than $-d$. \\ 
\end{proof}

\begin{theorem}
The functors $\Pc$ and $\Cc$ are quasi-inverse. 
\end{theorem}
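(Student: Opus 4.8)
The plan is to construct two natural isomorphisms $\Pc\comp\Cc\isoto\id_{\Pc erv_{\Sigma}}$ and $\Cc\comp\Pc\isoto\id_{\Cc(F,G,T)}$, treating the two composites separately.

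For $\Pc\comp\Cc$, I would start from a perverse sheaf $\Fc$ on $X$ and unwind $\Pc(\Cc(\Fc))$, where $\Cc(\Fc)=(j^{-1}\Fc,i^{-1}R^{-d}\Gamma_{\Kc}\Fc,u,v)$. Two identifications drive the argument. First, since the natural morphism $R\Gamma_{\Ll}\Fc\to R\Gamma_{\Ll}Rj_{*}j^{-1}\Fc$ is an isomorphism (Remark \ref{remarque ferme pervers}) and $j^{-1}R\Gamma_{\Kc}\Fc$ agrees with $R\Gamma_{\Kc}Rj_{*}j^{-1}\Fc$ on $X_{0}$, the sheaf $\Bc_{\Cc(\Fc)}$ produced by $G_{S}$ is, through the equivalence $(R_{S},G_{S})$ of Section 1, canonically isomorphic to $R^{-d}\Gamma_{\Kc}\Fc$; here one checks that the gluing datum $\tilde v$ really is the adjunction morphism $\eta$ attached to $R^{-d}\Gamma_{\Kc}\Fc$, which amounts to reading off $v$ from the definition of $\Cc$. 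Second, under this identification $\Phi_{\Cc(\Fc)}$ becomes (up to the shift used in its definition) the canonical morphism $R\Gamma_{\Ll}\Fc\to R\Gamma_{\Kc}\Fc[1]$, whose cone is $\Fc$ by rotating the triangle $R\Gamma_{\Kc}\Fc\to\Fc\to R\Gamma_{\Ll}\Fc$ and invoking Lemma \ref{ferme pervers}(ii). Lemma \ref{homologique} then yields a canonical, hence natural, isomorphism $\Pc(\Cc(\Fc))\cong\Fc$.

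For $\Cc\comp\Pc$, fix $\gimel=(\Gc,\Ll,u,v)$. The first component is settled by Lemma \ref{pervers}, which gives a natural isomorphism $j^{-1}\Pc(\gimel)\cong\Gc$. For the second component I would apply $R\Gamma_{\Kc}$ to the triangle defining $\Pc(\gimel)$, namely $R\Gamma_{\Ll}Rj_{*}\Gc\to\Bc_{\gimel}[d]\to\Pc(\gimel)\xrightarrow{+1}$. Since $\Kc\cap\Ll=\emptyset$ we have $R\Gamma_{\Kc}R\Gamma_{\Ll}Rj_{*}\Gc=R\Gamma_{\Kc\cap\Ll}Rj_{*}\Gc=0$, and $\Bc_{\gimel}$ is supported on $\Kc$: indeed $\Bc_{\gimel}\mid_{S}=\Ll$ with $S\subseteq\Kc$ by Remark \ref{remarque ferme pervers}, while $\Bc_{\gimel}\mid_{X_{0}}=j^{-1}R^{-d}\Gamma_{\Kc}Rj_{*}\Gc$ vanishes on the open set $X\setminus\Kc$. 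Hence $R\Gamma_{\Kc}\Pc(\gimel)\cong\Bc_{\gimel}[d]$, and taking $R^{-d}$ and restricting to $S$ gives $i^{-1}R^{-d}\Gamma_{\Kc}\Pc(\gimel)\cong\Bc_{\gimel}\mid_{S}=\Ll$.

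The remaining task, which I expect to be the main obstacle, is to check that the gluing data $u$ and $v$ are carried to themselves under the two identifications above and that all isomorphisms are natural in $\Fc$ and in $\gimel$. This is pure bookkeeping but delicate: one must unwind the adjunction morphism $\tilde v$, the truncation-and-$\mathrm{Ext}$ passage from the sheaf map $\phi_{\gimel}$ to the complex map $\Phi_{\gimel}$, and the compatibility of the units of $(R_{S},G_{S})$ with $i^{-1}$ and $j^{-1}$, keeping every shift consistent. The functoriality of $\Cc$, $\Pc$, $R_{S}$ and $G_{S}$, together with the naturality supplied by Lemma \ref{homologique} and by the functoriality of the cone already established, then upgrades the objectwise isomorphisms to natural transformations, so that $(\Pc,\Cc)$ is a pair of quasi-inverse functors.
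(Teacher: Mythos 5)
Your proposal is correct and follows essentially the same route as the paper: you identify $i^{-1}R^{-d}\Gamma_{\Kc}\Pc(\gimel)$ with $\Ll$ via the disjointness $\Kc\cap\Ll=\emptyset$ and the support of $\Bc_{\gimel}$, use Lemma \ref{pervers} for the restriction to $X_{0}$, identify $\Bc_{\Cc(\Fc)}$ with $R^{-d}\Gamma_{\Kc}\Fc$ through the equivalence $(R_{S},G_{S})$, and conclude with the uniqueness of cones from Lemma \ref{homologique} exactly as the paper does. The compatibility of the gluing data, which you defer to bookkeeping, is likewise only asserted (not carried out in detail) in the paper's own proof.
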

 \begin{proof}
 The proof of this theorem is essentially based on the lemma \ref{homologique} and on the fact that the functors $R_{S}$ and $G_{S}$ are quasi-inverse.\\
 Let us first prove that the functor $\Pc\Cc$ is isomorphic to the identity.
 Let $\gimel=(\Gc, \Ll, u,v)$ be an object of $\Cc(F,G,T)$. We denote $\Hc$ the perverse sheaves $\Pc(\gimel)=\Hc$ and $\gimel'=(\Hc\mid_{X_{0}}, R^{-d}\Gamma_{\Kc}\Hc\mid_{S}, u',v')$ the quadruple $\Cc\Pc(\gimel)=\gimel'$. We have already seen (lemma \ref{pervers}) that $j^{-1}\Hc$ is naturally isomorphic to $\Gc$. Let us consider the sheaf $(R^{-d}\Gamma_{\Kc}\Hc)\mid_{S}$. The complex $R\Gamma_{\Kc}\Hc$ is isomorphic to the cone of the morphism:
\[ R\Gamma_{\Kc}\Phi_{\gimel}: R\Gamma_{\Kc}R\Gamma_{\Ll}Rj_{*}\Gc\longrightarrow R\Gamma_{\Kc}\Bc_{\gimel}[d]\]
 But as $\Ll\cap \Kc=\emptyset$ and as $\Bc_{\gimel}$ is supported by $\Kc$ then $R\Gamma_{\Kc}\Hc$ is naturally  isomorphic to $\Bc_{\gimel}[d]$. Hence we have:
 \[(R^{-d}\Gamma_{\Kc}\Hc)\mid_{\Kc}\simeq\Bc_{\gimel}\mid_{\Kc}\]
 Hence by definition of $\B_{\gimel}$ and as $R_{S}$ and $G_{S}$ are quasi-inverse, the sheaves $(R^{-d}\Gamma_{\Kc}\Hc)\mid_{\Kc}$ and $\Ll$ are naturally isomorphic. Moreover, these isomorphisms commutes with the morphisms of sheaves $u'$ and $v'$.

 Now let us show that $\Cc\Pc$ is isomorphic to the identity. Let $\Fc$ be a perverse sheaf on $X$. 
 Let us recall the notations, the sheaf $\Bc_{\Cc(\Fc)}$ is the image by $G_{S}$ of the triplet $\big((R^{-d}\Gamma_{\Kc}Rj_{*}j^{-1}\Fc)\mid_{X_{0}},(R^{-d}\Gamma_{\Kc}\Fc)\mid_{S}, \tilde{v}\big)$,    
 \[\Bc_{\Cc(\Fc)}=G_{S}\Big(\big((R^{-d}\Gamma_{\Kc}Rj_{*}j^{-1}\Fc)\mid_{X_{0}},(R^{-d}\Gamma_{\Kc}\Fc)\mid_{S}, \tilde{v}\big)\Big)\]
 where $\tilde{v}$ is the composition of the morphisms:
 \[i^{-1}R^{-d}\Gamma_{\Kc}\Fc\rightarrow i^{-1}R^{-d}\Gamma_{\Kc}Rj_{*}j^{-1}\Fc\rightarrow i^{-1}j_{*}j^{-1}R^{-d}\Gamma_{\Kc}\Fc\]
The complex $\Fc$ is the unique cone of the morphism:
 \[ R\Gamma_{\Ll}\Fc\buildrel[+1]\over \longrightarrow R\Gamma_{\Kc}\Fc[1]\] 
 Indeed, the perversity conditions of the closed set $\Kc$ and of the complex $\Fc$ allow us to apply the lemma \ref{homologique}. Hence to show that $\Cc\Pc$ is isomorphic to the identity it suffices to define two natural isomorphisms such that the following diagram commutes:
 \[\xymatrix{
 R^{-d-1}\Gamma_{\Ll}Rj_{*}j^{-1}\Fc \eq[d]\ar[rrr]^{\phi_{\Cc(\Fc)}}&&& \Bc_{\Cc(\Fc)}\eq[d]\\
 R^{-d-1}\Gamma_{\Ll}\Fc \ar[rrr]_{[+1]}&&& R^{-d}\Gamma_{\Kc}\Fc
 }\]
 Let us denote $\delta$ the natural morphism:
 \[\delta: R^{-d-1}\Gamma_{\Ll}\Fc\longrightarrow R^{-d-1}\Gamma_{\Ll}Rj_{*}j^{-1}\Fc\]
and $\gamma$ be the natural morphism: 
\[ \gamma: R^{-d}\Gamma_{\Kc}\Fc\longrightarrow R^{-d}\Gamma_{\Kc}Rj_{*}j^{-1}\Fc\]
The couple $(i^{-1}\gamma, Id)$ is an isomorphism in $\cat S_{S}$ from $R_{S}( R^{-d}\Gamma_{\Kc}\Fc)$ to $\big((R^{-d}\Gamma_{\Kc}Rj_{*}j^{-1}\Fc)\mid_{X_{0}},(R^{-d}\Gamma_{\Kc}\Fc)\mid_{S}, \tilde{v}\big)$. Moreover as the suitable diagram of morphisms of sheaves on $X_{0}$ and on $S$ commutes and as $R_{S}$ and $G_{S}$ are a couple of quasi-inverse, we obtain that the following diagram commutes:
\[\xymatrix{
R^{-d-1}\Gamma_{\Ll}Rj_{*}j^{-1}\Fc \eq[d]_{\delta}\ar[rrr]^-{\phi_{\Cc(\Fc)}}&&& \Bc_{\Cc(\Fc)} \eq[d]^-{G_{S}((i^{-1}\gamma,Id))}\\
 G_{S}R_{S}(R^{-d-1}\Gamma_{\Ll}\Fc) \ar[rrr]_{G_{S}R_{S}([+1])}&&&G_{S}R_{S}( R^{-d}\Gamma_{\Kc}\Fc)
 }\]
 Then composing this isomorphisms with the one given by the quasi-inversibility of $R_{S}$ and $G_{S}$, we obtain the looked isomorphisms.

\end{proof}
\bibliographystyle{alpha}
\bibliography{biblio}

\end{document}